\documentclass[reqno, a4paper]{amsart}

\usepackage[T1]{fontenc}
\usepackage[utf8]{inputenc}
\usepackage[british]{babel}
\usepackage{enumitem}
\usepackage{amsfonts, amssymb, amsthm, amsmath}
\usepackage[all]{xy}
\usepackage{mathbbol}
\usepackage{booktabs}
\usepackage{calrsfs}
\usepackage{graphicx}
\usepackage{xcolor}
\usepackage{tikz-cd}
\usetikzlibrary{calc}
\usepackage{mathtools}
\usepackage[colorlinks, citecolor=blue]{hyperref}

\usepackage{mathabx}

\mathtoolsset{mathic}

\theoremstyle{plain}
\newtheorem{lemma}[subsection]{Lemma}

\newtheorem{theorem}[subsection]{Theorem}
\newtheorem{corollary}[subsection]{Corollary}

\theoremstyle{remark}
\newtheorem{example}[subsection]{Example}
\newtheorem{remark}[subsection]{Remark}

\theoremstyle{definition}
\newtheorem{definition}[subsection]{Definition}

\allowdisplaybreaks

\makeatletter
\newcommand*{\comment}[4][]{%
  \@ifundefined{c@#2}{\newcounter{#2}}{}%
  \begingroup
  \ifblank{#1}{%
    \newcommand*{\header}{\csname the#2\endcsname}%
  }{%
    \newcommand*{\header}{(#1\csname the#2\endcsname)}%
  }%
  \addtocounter{#2}{1}%
  \textnormal{\textcolor{#3}{\header.[#4\@]}}%
  \endgroup
}
\makeatother
\DeclareMathOperator{\eval}{eval}
\newcommand{\spn}[1]{\left\langle#1\right\rangle}
\renewcommand{\k}{\kappa}

\newcommand{\N}{\mathbb{N}}
\newtheorem*{theorem*}{Theorem}
\newcommand{\id}{\mathbb{1}}
\usepackage{relsize}
\usepackage{scalerel}
\newcommand{\cat}[1]{{\underline{\normalfont\textbf{#1}}}}
\def\acts{\curvearrowright}
\newcommand{\inje}{\hookrightarrow}

\newcommand{\surj}{\twoheadrightarrow}

\newcommand{\splitepi}{\mathrel{\substack{\mathlarger{\twoheadrightarrow}\\[-0.3ex]
			\mathlarger{\hookleftarrow}}}}

\newcommand{\splitepim}[2]{\mathrel{\substack{\overset{#1}{{\mathlarger{\twoheadrightarrow}}}\\[-0.3ex]
			{\underset{#2}{\mathlarger{\hookleftarrow}}}}}}
\newlist{tfae}{enumerate}{1}
\setlist[tfae]{label=\textnormal{(\roman*)}}

\DeclareMathOperator{\Rinv}{Rinv}
\DeclareMathOperator{\inv}{inv}

\DeclarePairedDelimiterX{\IndArr}[1]{(}{)}{
  \renewcommand*{\and}{,}
  #1
}
\DeclarePairedDelimiterX{\CoindArr}[1]{\langle}{\rangle}{
  \renewcommand*{\and}{,}
  #1
}

\tikzcdset{
  mono/.style=tail
}
\tikzcdset{
  nmono/.style=Triangle[{open, reversed}]->
}
\tikzcdset{
  repi/.style=-Triangle[open]
}
\tikzcdset{
  induced/.style=dashed
}

\def\pullback{% with thanks to Valerian Even
 \ar@{-}[]+R+<6pt,-1pt>;[]+RD+<6pt,-6pt>%
 \ar@{-}[]+D+<1pt,-6pt>;[]+RD+<6pt,-6pt>}
 
\def\pushout{%
 \ar@{-}[]+L+<-6pt,1pt>;[]+LU+<-6pt,6pt>%
 \ar@{-}[]+U+<-1pt,6pt>;[]+LU+<-6pt,6pt>}

\def\splitpullback{%
 \ar@{-}[]+R+<6pt,-.51ex>;[]+RD+<6pt,-6pt>%
 \ar@{-}[]+D+<.51ex,-6pt>;[]+RD+<6pt,-6pt>}

\def\skewpullback{%
 \ar@{-}[]+LD+<-6pt,-6pt>;[]+LDD+<-6pt,-15.5pt>%
 \ar@{-}[]+D+<-1pt,-6pt>;[]+LDD+<-6pt,-15.5pt>}

\newdir{>>}{{}*!/3.5pt/:(1,-.2)@^{>}*!/3.5pt/:(1,+.2)@_{>}*!/7pt/:(1,-.2)@^{>}*!/7pt/:(1,+.2)@_{>}}
\newdir{ >>}{{}*!/8pt/@{|}*!/3.5pt/:(1,-.2)@^{>}*!/3.5pt/:(1,+.2)@_{>}}
\newdir{ |>}{{}*!/-3.5pt/@{|}*!/-8pt/:(1,-.2)@^{>}*!/-8pt/:(1,+.2)@_{>}}
\newdir{ >}{{}*!/-8pt/@{>}}
\newdir{>}{{}*:(1,-.2)@^{>}*:(1,+.2)@_{>}}
\newdir{<}{{}*:(1,+.2)@^{<}*:(1,-.2)@_{<}}

\tikzcdset{
  mono/.style=tail
}
\tikzcdset{
  nmono/.style=Triangle[{open, reversed}]->
}
\tikzcdset{
  repi/.style=-Triangle[open]
}
\tikzcdset{
  squared/.style={sep={#1em,between origins}},
  squared/.default=4.5
}
\DeclareMathDelimiter\AMSlrcorner{\mathclose}{AMSa}{"79}{AMSa}{"79}

\begin{document}

\title[Kaluzhnin--Krasner embedding theorem for monoids]{A Kaluzhnin--Krasner Embedding theorem\\ for Schreier extensions of Monoids}
\author{Lennert De Baecke}
\email{Lennert.Jan.G.De.Baecke@vub.be}

\address{Department of Mathematics and Data Science, Vrije Universiteit Brussel, Pleinlaan 2, 1050 Elsene, Belgium.}

\date{\today}

\subjclass[2020]{20M32, 20M50, 20J99, 18E99, 18G50}

\keywords{Schreier extension; extensions of monoids; wreath product; semidirect product; monoid action; Dedekind-finite monoids}

\begin{abstract}
	We study Schreier extensions of monoids and establish a Kaluzh\-nin--Krasner embedding theorem for Schreier extensions. First, we prove that the category of monoids is not locally algebraically cartesian closed (LACC) and that a monoid is  algebraically exponentiable in the category of monoids if and only if it is a Dedekind-finite monoid. Second,  we recall that the category  of extensions of monoids is \(S\)-LACC with \(S\) the class of Schreier extensions, which defines a wreath product \(A \wr B\) for any two monoids. Finally, we prove a Kaluzhnin--Krasner embedding theorem for Schreier extensions that are not necessarily split, i.e.\ given any Schreier extension \(A \inje G \surj B\) of monoids, there is a monomorphism \(\phi_G \colon G \inje A \wr B\), which is part of a morphism of extensions. The proof adapts the classical group-theoretic argument by replacing conjugation, which requires inverses, with a substitute made available by the Schreier property, namely, the unique factorization of elements in the fibers of the projection \(p \colon G \surj B\).
\end{abstract}

\maketitle

\section*{Introduction}
The Kaluzhnin--Krasner embedding theorem, originally stated by Marc Krasner and Lev
Kaluzhnin in 1951 \cite{KmKl}, states that for any group extension of \(A\)
and \(B\), that is,\ \(A\inje G\surj B\) where \(A\) is the kernel of the second map and \(B\) is the cokernel
of the first map, there is an injective group morphism \(\phi_G\colon G\inje A\wr B\) to the
wreath product of \(A\) and \(B\). This theorem not only states the existence but also provides a construction of this map \(\phi_G\). Moreover,  as recalled in~\cite{BXTkrasner}, the wreath product induces a split extension
\[
	\begin{tikzcd}
		A^B\arrow[r,hook]&A\wr B\arrow[r,shift left,two heads]&B\arrow[l,shift left, hook],
	\end{tikzcd}
\]
which allows us to view the Kaluzhnin--Krasner theorem as an embedding of extensions. In other words, for any extension \(A\inje G\twoheadrightarrow B\), there exists a monomorphism in the category of group extensions:
\begin{equation}\label{equation:MonoExG}
	\begin{tikzcd}
		A\arrow[r,hook,"\kappa"]\arrow[d,"\phi_A"']&G\arrow[r,two heads,"p"]\arrow[d,"\phi_G"]&B\arrow[d,equal]\\
		A^B\arrow[r,hook]&A\wr B\arrow[r,shift left,two heads]&B\arrow[l,shift left, hook].
	\end{tikzcd}
\end{equation}
Here, \(\phi_A\) is defined by the universal property of the kernel \(A^B\inje A\wr B\).
This construction is not unique; each possible \cat{Set}-section \(s\colon
B\inje G\) defines such a group monomorphism \(\phi_G\colon G\inje A\wr B\). The wreath product \(A\wr B\) is the semidirect product \(A^B\rtimes B\) of
\(A^B\) (all possible \cat{Set}-maps) and \(B\), with the following action:
\[
	\acts\colon A^B\times B \to A^B\colon \left(h(-),b\right)\mapsto h(-\cdot b).
\]
The group morphism \(\phi_G\) maps an element \(g\) in \(G\) to \(\left(h_g(-),p(g)\right)\) in \(A\wr B\), where \(h_g\)
maps \(b\) to \(s(b)gs\left(bp(g)\right)^{-1}\in A\).

The study of arbitrary extensions of groups is difficult. This theorem gives a
restriction to this problem---that is, an extension must be a subgroup of \(A\wr B\). This has nice applications; for example, Weir described Sylow subgroups of Galois groups: each such subgroup arises as a group extension, and hence, by Kaluzhnin--Krasner, all such are embedded into wreath products \cite{KKimpliesSylow}. Moreover, if \(A\) and \(B\) are solvable, then so is \(A \wr B\). The
Kaluzhnin--Krasner theorem then gives an immediate proof that any group
extension of solvable groups is again solvable, since \(G\) embeds as a
subgroup of the solvable group \(A \wr B\), and subgroups of solvable groups
are solvable. Examples of categories where an analogous Kaluzhnin--Krasner theorem holds are cocommutative Hopf algebras \cite{LOTHopf} and Lie
algebras \cite{LieLAlgebrasLACC}.

Let \(\mathbb{X}\) be a pointed category, and let \(\cat{SpEx}_B(\mathbb{X})\) be the category of split extensions over \(B\in Ob(\mathbb{X})\) (see Definition \ref{def:split-exact sequence} below). Consider the forgetful functor
\[
	K_\mathbb{X}\colon \cat{SpEx}_B(\mathbb{X})\to \mathbb{X}\colon (A\inje G \splitepi B)\mapsto
	A
\]
with the obvious action on morphisms, which we won't mention explicitly, also in other instances of this functor, throughout the paper.
If for all \(B\in Ob(\mathbb{X})\) the functor \(K_{\mathbb{X}}\) has a right adjoint then we call the category \(\mathbb{X}\)
\emph{locally algebraically cartesian closed (LACC)}. This notion was introduced by Gray \cite{Gray2012,GrayPHD, BGr},   with the aim of establishing an approach to algebraic exponentiation.
For groups, the Kaluzhnin--Krasner theorem can be used to prove that the category of groups is LACC. A class of categories which are all LACC, are the categories of internal groups in any cartesian closed category
\cite{Gray2012}. Since the category of groups corresponds to the internal groups of the cartesian closed category
\cat{Set}, it also (in addition to what was mentioned earlier) follows from \cite{Gray2012} that the category of groups is LACC. Using the LACC property for groups, one can also
construct a proof of the Kaluzhnin--Krasner theorem \cite{BXTkrasner}. We will give a brief argument (Theorem \ref{theorem:LACCdedekind}) why the category of monoids
\cat{Mon} is not LACC i.e.\ for some monoids \(B\) the functor \(K_{\cat{Mon}}\colon \cat{SpEx}_B(\cat{Mon})\to \cat{Mon}\) does not have a right adjoint. However, a first result (Theorem \ref{theorem:LACCdedekind}) is a weaker statement; if we restrict ourselves to the category of Dedekind-finite monoids (see Definition \ref{definition:DKFmon} below) the functor
\begin{equation}\label{equation:LDFMon}
	K_{\cat{Mon}}\colon \cat{SpEx}_B(\cat{Mon})\to \cat{Mon}\colon \left(A\inje G \splitepi
	B\right)\mapsto A
\end{equation}
has a right adjoint if and only if \(B\) is a Dedekind-finite monoid. This proves that a monoid \(B\) is Dedekind-finite if and only if \(B\) is \emph{algebraically exponentiable} in the category of monoids (introduced in \cite[Section 8.4]{TimXaAE} and \cite{Gray2012}).
If we instead do not restrict our set of monoids, but restrict the set of split extensions \(\cat{SpEx}_B(\cat{Mon})\) to the smaller class of so-called \emph{Schreier split extensions}
\(\cat{SchSpEx}_B(\cat{Mon})\) (see Definition \ref{definition:Schreier} below), then the functor
\begin{equation}\label{equation:LMon}
	K\colon \cat{SchSpEx}_B(\cat{Mon})\to \cat{Mon}\colon \left(A\inje G \splitepi
	B\right)\mapsto A
\end{equation}
has a right adjoint for any \(B\in Ob(\cat{Mon})\).
This was first proven in \cite{AndreaMontoi}. One should observe that even though \(K\dashv R\colon \cat{Mon}\to \cat{SchSpEx}_B(\cat{Mon})\) ends up in Schreier split extensions, the functor is defined for any two monoids \(A\) and \(B\); hence, for any two monoids, we can define a wreath product of monoids, which we will call the Schreier wreath product.
Until now, it was unclear whether an analogous Kaluzhnin--Krasner embedding theorem holds for Schreier extensions (not necessarily split). The main result of this paper is the following affirmative answer to this question:
\begin{theorem*}[Kaluzhnin--Krasner theorem for Schreier extensions]
	For any Schreier extension of monoids \(0\to A\inje G \surj B \to 0\), the monoid \(G\) can be
	embedded into the Schreier wreath product via a monoid monomorphism \(\phi_G\colon G
	\inje A\wr B\). Moreover, this comes from an embedding in \(\cat{SchEx}_B(\cat{Mon})\).
\end{theorem*}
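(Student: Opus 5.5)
The plan is to imitate the classical construction of \(\phi_G\) described in the introduction. Conjugation \(s(b)gs(bp(g))^{-1}\) is not available, so I replace it by the unique factorization that the Schreier property provides. Fix a Schreier extension \(A\overset{\kappa}{\inje} G\overset{p}{\surj} B\) with its set-theoretic section \(s\colon B\to G\), normalised so that \(s(1)=1\) and \(p s=1_B\). The Schreier condition says that every \(x\in G\) factors uniquely as \(x=\kappa(a)\,s(p(x))\) with \(a\in A\); write \(q(x)\) for this \(a\), the Schreier retraction.

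I would first make explicit the Schreier wreath product \(A\wr B\) coming from the right adjoint of \(K\colon \cat{SchSpEx}_B(\cat{Mon})\to\cat{Mon}\). Concretely, I expect it to be the semidirect product \(A^B\rtimes B\) of the monoid of all maps \(B\to A\) with \(B\), acting by \(h\mapsto h(-\cdot b)\). Its multiplication is \((h,b)(h',b')=\bigl(h(-)\,h'(-\cdot b),\,bb'\bigr)\), and the split Schreier extension is \(A^B\inje A\wr B\splitepi B\). The left/right conventions should be adjusted to match the paper's earlier definitions, but the shape of the argument does not depend on them.

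Define \(\phi_G(g)=(h_g,p(g))\) with \(h_g(b)=q\bigl(s(b)g\bigr)\). Equivalently, \(h_g(b)\) is the unique element of \(A\) with
\[
s(b)\,g=\kappa\bigl(h_g(b)\bigr)\,s\bigl(b\,p(g)\bigr),
\]
which makes sense because \(s(b)g\) lies in the fibre over \(bp(g)\). This identity replaces \(h_g(b)=s(b)gs(bp(g))^{-1}\).

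For the homomorphism property, I compute \(s(b)gg'\) in two steps. First,
\[
s(b)gg'=\kappa(h_g(b))\,s(bp(g))\,g'.
\]
Next, applying the defining identity of \(h_{g'}\) at the point \(bp(g)\),
\[
s(b)gg'=\kappa(h_g(b))\,\kappa\bigl(h_{g'}(bp(g))\bigr)\,s\bigl(bp(g)p(g')\bigr).
\]
Uniqueness of the Schreier factorization then gives \(h_{gg'}(b)=h_g(b)\,h_{g'}(bp(g))\), which is exactly the first component of \((h_g,p(g))(h_{g'},p(g'))\). For the unit, \(s(b)\cdot 1=\kappa(1)s(b)\) gives \(h_1=1\).

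Injectivity comes from evaluating at \(b=1\). Since \(s(1)=1\), we get \(g=\kappa(h_g(1))\,s(p(g))\), so \(g\) is recovered from \(\phi_G(g)\).

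Finally, \(\phi_G\) commutes with the projections to \(B\) by construction. Restricting \(\phi_G\) along \(\kappa\) lands in the kernel \(A^B\), since \(p\kappa=1\). This yields \(\phi_A\colon a\mapsto q(s(-)\kappa(a))\), which is injective by the same evaluation at \(1\). Hence \((\phi_A,\phi_G,1_B)\) is a monomorphism in \(\cat{SchEx}_B(\cat{Mon})\).

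I expect the main obstacle to lie not in these computations but in the bookkeeping around the definitions. One point is to confirm that the Schreier wreath product produced by the right adjoint of \(K\) is exactly this semidirect product, with this action and ordering, so that the formula for \(h_{gg'}\) matches its multiplication. The other is to check that morphisms in \(\cat{SchEx}_B(\cat{Mon})\) are just morphisms of extensions, rather than being required to preserve the chosen sections or retractions. If section-compatibility were required, I would try to verify it by comparing \(\phi_G(s(b))\) with the canonical section \(b\mapsto(1,b)\) of \(A\wr B\); this comparison generally fails, so the correct notion of morphism must be pinned down first.
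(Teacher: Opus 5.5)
Your proposal is correct and follows the paper's proof essentially step for step. It uses the same Schreier section with \(s(1)=1\) and the same \(h_g(b)\), defined by \(s(b)g=\kappa(h_g(b))\,s(bp(g))\). It proves multiplicativity by the same uniqueness argument and injectivity by the same evaluation at \(b=1\). The one variation is that you obtain \(\phi_A\) by restricting \(\phi_G\) along the kernel, which is a slightly more economical version of the paper's direct definition via \(s(b)a=a^b s(b)\).

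Both of your bookkeeping worries resolve favourably. The paper's \(A\wr B\) is exactly \(A^B\rtimes B\) with \((h,b)(h',b')=(h\cdot h'(-\cdot b),bb')\). Its morphisms of extensions only require the \(\kappa\)-square and the \(p\)-square to commute, with no compatibility with sections.
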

For completeness, we provide a self-contained proof of the \(S\)-LACC condition for Schreier split extensions (see the original proof in \cite{AndreaMontoi}). We will explicitly use the right adjoint \(R\) of
(\ref{equation:LMon})  to prove the Kaluzhnin--Krasner theorem for monoids. We cannot simply change the construction of the
Kaluzhnin--Krasner theorem for groups to monoids, since it uses inverses (see the construction of
\(\phi_G\) for groups recalled above); the Schreier property gives us a way to get rid of the inverses.
Moreover, our proof fails for weaker notions, such as \emph{weakly Schreier extensions}.
Schreier split extensions were introduced by Rédei \cite{RedeiSchreierMonoids}. Schreier extensions have been studied extensively, such as the Galois theory of Schreier extensions for monoids \cite{GaloisForMonoids}, and by Patchkoria in \cite{PatchCrossed}, which connects Schreier extensions to the internal categorical/crossed module machinery, and \cite{PatchCoho}, where cohomology monoids describe Schreier extensions of semimodules by monoids.
\section{Definitions and examples}
\begin{definition}\label{def:split-exact sequence}
	Let \(\mathbb{X}\) be a pointed category and \(A\), \(B\) and \(G\)  be objects in \(\mathbb{X}\). An \emph{exact sequence} is a diagram:
	\[
		\begin{tikzcd}
			A\arrow[r,hook,"\k"]&G\arrow[r,two heads, "p"]&B,
		\end{tikzcd}
	\]
	where \(\k\colon A\to G\) and \(p\colon G \to B\) are morphisms in \(\mathbb{X}\) such that \(\k=\text{ker}(p)\) and \(p=\text{coker}(\k)\)
	(in this case, we call \(G\) an \emph{extension} of \(A\) and \(B\)).

	If, in addition, we have a morphism \(s\colon B \to G\) in \(\mathbb{X}\) such that \(p\circ s=id_B\), we call the diagram a \emph{split exact sequence} (and \(G\) a \emph{split extension} of \(A\) and \(B\)):
	\[
		\begin{tikzcd}
			A\arrow[r,hook,"\k"]&G\arrow[r,two heads, "p",shift left]&B\arrow[l,shift left,"s"].
		\end{tikzcd}
	\]

	A morphism in the category of (split) extensions over \(B\) is a couple \((\phi_A,\phi_G)\) such that in the following diagram, the two squares pointing right commute:
	\begin{equation*}
		\begin{tikzcd}
			&&A \arrow[d,"\phi_A"']\arrow[rr,"\k",hook]&&G\arrow[d,"\phi_G"]\arrow[rr,two heads,"p",shift left]
			&&B\arrow[ll,hook,shift left,"s"]\arrow[d,equal]\\
			&& A' \arrow[rr,"\k'",hook]&&G'
			\arrow[rr,two heads,"p'",shift left]
			&&B\arrow[ll,hook,shift left,"s'"]
		\end{tikzcd}.
	\end{equation*}
\end{definition}
The following definition was first introduced in \cite[Definition 2.6]{MontoliAndSobra}, and the name is inspired by Schreier internal categories introduced by Patchkoria \cite{PatchCrossed}.
\begin{definition}[\text{\cite[Section 2]{MontRodTim}}, \text{\cite[Definitions 4.1 and 3.1]{MonoidsSchreier1}}]\label{definition:Schreier}
	\leavevmode \vspace{-.01cm}
	\begin{enumerate}[label=(\roman*)]
		\item A split extension of monoids is \emph{Schreier} if for every \(g\in G\) there exists a \underline{unique} element \(a_g\in A\) such that \(g=\k(a_g)\cdot
		      s\left(p(g)\right)\).
		\item An extension of monoids is \emph{Schreier} if for each \(b\in B\)  there is an
		      element
		      \(g_b\in p^{-1}(b)\) such that each \(g\in p^{-1}(b)\) can be written as a
		      \underline{unique} \(a_g\in A\) multiplied by \(g_b\), i.e.\ \(g=\k(a_g)g_b\) if \(g\in
		      p^{-1}(b)\).
		\item An extension of monoids is \emph{weakly Schreier} if for each \(b\in B\)  there is an
		      element
		      \(g_b\in G\) such that each \(g\in p^{-1}(b)\) can be written as a
		      (not necessarily unique) \(a_g\in A\) multiplied by \(g_b\), i.e.\ \(g=\k(a_g)g_b\) if
		      \(g\in
		      p^{-1}(b)\).
	\end{enumerate}
\end{definition}
In the category \cat{Grp} of groups, the split extensions are well known: the split extensions of a group \(B\) coincide with all possible semidirect products:
\[
	\begin{tikzcd}
		A\arrow[r,hook,"\k"]&A\rtimes B\arrow[r,two heads, "p",shift left]&B\arrow[l,shift left,"s"].
	\end{tikzcd}
\]
This implies that split extensions of groups are determined by  \(A\), \(B\), and an action \(B\) on \(A\) by automorphisms.
Schreier split extensions \(A\inje G \splitepi B\) behave similarly, since such an extension coincides with a monoid action of \(B\) on \(A\) by endomorphisms such that \(G\cong A\rtimes B\) \cite[3.5.\ Semidirect products]{GaloisForMonoids}.
For an arbitrary extension of groups (i.e.\ not necessarily split), there are always \cat{Set}-maps \(q\) and \(s\) such that
\[
	\begin{tikzcd}
		A\arrow[r,shift right, "j"',hook]&G\arrow[l,"q"',two heads,shift right]\arrow[r,"p",two heads,shift left]&B\arrow[l,shift left,hook,"s"],
	\end{tikzcd}
\] and \(G=A\times B\) as sets. For a Schreier extension of monoids, we have \(G=A\times B\) as sets; however, for general monoid (split-)extensions, this is not necessarily true (see Subsection \ref{subsec:Weakly} for an example of such a weakly Schreier split extension).
\begin{remark}
	\begin{enumerate}[label=(\roman*)]
		\item For a weakly Schreier extension, there is a surjective map:
		      \begin{align*}
			      A\times B  \to G\colon
			      (a,b) \mapsto \k(a)g_b.
		      \end{align*}
		\item For a Schreier extension, the above surjective map is bijective. We prove that it is injective. Suppose \(\k(a)g_b=\k(a')g_{b'}\); this would imply that
		      \(p\left(\k(a)g_b\right)=p\left(\k(a')g_{b'}\right)\), hence \(b=b'\) and therefore
		      \(g_b=g_{b'}\). So we have \(\k(a)g_b=\k(a')g_{b}\), however by definition of a
		      Schreier extension, the \(a\) part is unique; hence, \(a=a'\).
		\item For a Schreier split extension we have, as sets, that \(G\) is isomorphic with
		      \(A\times B\) via the following bijection:
		      \begin{align*}
			      G\to A \times B\colon g\mapsto (a_g,p(g)).
		      \end{align*}
		      Moreover, as said before in this case, the monoid \(G\) is isomorphic to \(A\rtimes B\) with an action by endomorphisms of \(B\) on \(A\) (see also the work of \cite[Section 3.1]{MonoidsSchreier1}, \cite[3.5.
			      Semidirect products]{GaloisForMonoids} and \cite{MontoliAndSobra}).
	\end{enumerate}
	In all three cases, we have a natural \cat{Set}-section \(s\colon B\to G\colon b\mapsto g_b\). 

	Throughout the rest of this paper, we will write \(ag\) for the action of \(a\in A\) on \(g\) in \(G\) instead of writing the more correct \(\k(a)g\).
\end{remark}
\begin{example}\label{example:Schreier}
	\begin{enumerate}[label=(\roman*)]
		\item As an example of a Schreier split extension, consider\footnote{Here, \(\mathbb{Z}_0\coloneq \mathbb{Z}\setminus\{0\}\) and \(\mathbb{N}_0\coloneq \mathbb{Z}_{>0}\).} \[\mathbb{Z}/2\mathbb{Z}\overset{\k}{\inje} (\mathbb{Z}_0,\cdot )\splitepim{p}{s} (\mathbb{N}_0,\cdot)\] with \(\kappa\colon 0,1\mapsto 1,-1\) and \(p\colon n\mapsto |n|\). We have \(\mathbb{Z}_0\cong \mathbb{Z}/2\mathbb{Z}\rtimes \N_0\).
		\item As an example of a Schreier extension which is not split, consider \[(\mathbb{N},+)\overset{\k}{\inje} (\mathbb{N},+)\overset{p}{\surj} \mathbb{Z}/2\mathbb{Z}\]
		      with \(\k\colon n\mapsto 2n \) and \(p\colon n\mapsto n\; (\text{mod }2)\). This extension is clearly not split because there is no  \cat{Mon}-morphism \(\mathbb{Z}/2\mathbb{Z}\to \mathbb{N}\).
	\end{enumerate}
\end{example}
The Schreier extensions defined in Definition \ref{definition:Schreier} are also called right Schreier extensions in \cite{ClassSchreierKernel} or right homogeneous extensions in \cite[Chapter 2]{BookSMR}. As mentioned above, they correspond to a left\footnote{The opposite on a good name: a \emph{right} action corresponds to a \emph{left} homogeneous extension. } action; when we write the category \(\cat{SchEx}_B(\cat{Mon})\), we mean right Schreier extensions.
\section{LACC property for monoids}
\subsection{The category of monoids is not LACC}
In this section, we provide a brief argument as to why the category of monoids is not LACC, that is, the functor
\begin{equation}\label{equation:KforMon}
	K_{\cat{Mon}}\colon \cat{SpEx}_B(\cat{Mon})  \to \cat{Mon}\colon
	\left(A\inje G\splitepi B  \right)                       \mapsto A
\end{equation}
does not have a right adjoint for some \(B\in Ob(\cat{Mon})\). One can verify that this functor does have a left adjoint being:
\begin{equation}\label{equation:LforMon}
	L\colon \cat{Mon}\to \cat{SpEx}_B(\cat{Mon})\colon A\mapsto \bigl(M\inje A*B\splitepi B\bigr).
\end{equation}
Note that, by definition, \(M=KL(A)\).
Here, \(*\coloneq +_\cat{Mon}\) is the coproduct (free product) in the category of monoids.

\begin{definition}\label{definition:DKFmon}
	A Dedekind-finite monoid is a monoid \(M\) that satisfies the following condition:
	\[
		\forall x,y\in M,\quad xy=\id_M\Rightarrow yx=\id_M.
	\]
	In other words, any left inverse in \(M\) is also a right inverse, and vice versa.
\end{definition}
Throughout this section, we denote \(\Rinv(B)\) as the set of right invertible elements in \(B\) and \(\inv(B)\) as the set of invertible elements.
\begin{lemma}\label{lemma:freekernelForm}
	The category of split extensions of monoids \(\cat{SpEx}_B(\cat{Mon})\) has coproducts. In particular, for split extensions
	\[
		E\coloneq (A\inje G \splitepi B)
	\]
	and
	\[
		E'\coloneq( A'\inje G'\splitepi B)
	\]
	we have
	\begin{equation}\label{equation:freededprod}
		K(E+E')\supseteq\spn{bwb^{-1}\mid w\in A*A', b\in \Rinv(B)}.
	\end{equation}
	Moreover, if \(B\) is a Dedekind-finite monoid, then (\ref{equation:freededprod}) becomes an equality.
\end{lemma}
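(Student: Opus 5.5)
We construct the coproduct explicitly as a pushout over \(B\). Given split extensions \(E=(A\inje G\splitepi B)\) with splitting \(s\) and \(E'=(A'\inje G'\splitepi B)\) with splitting \(s'\), let \(P\) be the pushout of \(G\xleftarrow{s}B\xrightarrow{s'}G'\) in \(\cat{Mon}\), i.e.\ the amalgamated free product \(G*_B G'\). Its projection \(P\to B\) is induced by \(p\) and \(p'\), which agree on \(B\) since \(ps=p's'=id_B\), and its splitting is the common composite \(B\to P\). So \(P\) is a split epimorphism onto \(B\). Taking \(K(E+E')\) to be the kernel of \(P\surj B\) gives a split extension. (In \(\cat{Mon}\) a split epimorphism need not be the cokernel of its kernel, so if the definition demands exactness one would pass to the exact part; I expect the universal property to go through either way.) The universal property is then routine: a pair of morphisms of split extensions \(E\to E''\) and \(E'\to E''\) are morphisms \(G\to G''\) and \(G'\to G''\) over and under \(B\). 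They therefore agree on \(B\) and factor uniquely through \(P\), compatibly with \(P\to B\), and the kernel components are induced by the universal property of kernels.

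Next comes the inclusion (\ref{equation:freededprod}). Take \(b\in\Rinv(B)\) with a right inverse, which we write \(b^{-1}\), so \(bb^{-1}=\id\). Take \(w\) a word in \(A*A'\), mapped into \(P\) via \(A\inje G\to P\) and \(A'\inje G'\to P\). Then \(p_P\) sends \(w\) to \(\id\), and so
\[
p_P\bigl(s(b)\,w\,s(b^{-1})\bigr)=b\cdot\id\cdot b^{-1}=\id .
\]
Hence each \(bwb^{-1}\) lies in the kernel. Since the kernel is a submonoid, the submonoid generated by these elements is contained in it.

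For the reverse inclusion, assume \(B\) is Dedekind-finite. Every element of \(P\) is a product of elements of \(G\) and \(G'\). In a split extension each \(g\in G\) factors as \(g\,s(p(g))\cdots\)? That fails without the Schreier property, so instead I use that \(G\) is generated by \(A\cup s(B)\) only modulo exactness. Here is the main obstacle: in general \(G\) need not be generated by \(\k(A)\) and \(s(B)\) for a monoid split extension. I would first reduce to the case where it is, by replacing \(E\) with the split extension generated by \(A\) and \(B\). Since \(p=\mathrm{coker}(\k)\), every element of \(G\) mapped to \(B\) is determined by the congruence generated by \(A\), and I would use this to show that the relevant kernel is generated by conjugates.

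Granting that \(P\) is generated by \(A\cup A'\cup B\), an element of the kernel is a word \(b_0w_1b_1w_2\cdots w_nb_n\) with \(b_0b_1\cdots b_n=\id\). Dedekind-finiteness of \(B\) yields that each prefix product \(c_i=b_0\cdots b_i\) is right invertible. Indeed, \(c_i\) has the right inverse \(b_{i+1}\cdots b_n\), and by Dedekind-finiteness this is a two-sided inverse, so \(c_i\in\inv(B)\). We can then rewrite the word as
\[
(c_0w_1c_0^{-1})(c_1w_2c_1^{-1})\cdots(c_{n-1}w_nc_{n-1}^{-1})\,c_n ,
\]
where \(c_n=\id\). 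This works because \(c_{i-1}^{-1}c_i=b_i\) exactly when \(c_{i-1}\) is invertible. The word thus lies in the generated submonoid, which gives equality. The delicate points are the generation reduction and the identification of \(K(E+E')\) with the kernel of \(P\), and the rewriting step is where Dedekind-finiteness is used essentially.
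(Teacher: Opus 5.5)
Your construction of the coproduct as the amalgamated free product \(P=G*_BG'\) is correct and matches the paper, and so are the easy inclusion and the closing rewriting via the prefix products \(c_i\). The gap is the step in between. You never show that an element of \(K(E+E')=\ker\bar p\) (your \(p_P\)) can be written as a word \(b_0w_1b_1\cdots w_nb_n\), with the \(w_i\) coming from \(A\) and \(A'\) and the \(b_i\) from the common copy of \(B\). You rightly note that \(G\) need not be generated by \(\k(A)\cup s(B)\). However, replacing \(E\) by the sub-extension generated by \(A\) and \(s(B)\) is not an argument. Nothing you say rules out kernel elements of \(P\) that do not come from the pushout of the sub-extensions, and ruling this out is exactly what has to be proved. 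Likewise, ``granting that \(P\) is generated by \(A\cup A'\cup B\)'' assumes something false in general. It would force \(G\) to be generated by \(\k(A)\cup s(B)\), since \(P\) retracts onto \(G\) via \(\mathrm{id}_G\) and \(s\circ p'\). Your remark about the congruence generated by \(A\) is never made concrete.

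The missing idea is that Dedekind-finiteness removes the need for any generation hypothesis, because only kernel elements matter. Represent \(g\in\ker\bar p\) by a word \(g_1g_2'\cdots g_{n-1}g_n'\) with \(g_i\in G\) and \(g_i'\in G'\); then \(p(g_1)p'(g_2')\cdots p'(g_n')=\id_B\). In a Dedekind-finite monoid, every factor of a product equal to \(\id_B\) is invertible. Indeed, \(x_1(x_2\cdots x_m)=\id_B\) gives \((x_2\cdots x_m)x_1=\id_B\), so \(x_1\in\inv(B)\), and you repeat with the rotated product \(x_2\cdots x_mx_1=\id_B\). Next, every \(g\in G\) with \(p(g)\in\inv(B)\) factors as \(g=s(p(g))\cdot a\) with \(a=s\bigl(p(g)^{-1}\bigr)g\). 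This \(a\) lies in \(\k(A)\) because \(p(a)=\id_B\), and the factorization holds because \(s(p(g))s\bigl(p(g)^{-1}\bigr)=s(\id_B)=\id_G\); the same works in \(G'\). This gives every kernel element the form \(b_1a_1b_2a_2\cdots b_na_n\) with \(b_1\cdots b_n=\id_B\), which is what the paper does, and your prefix-product rewriting then finishes the proof. So the kernel is generated by conjugates even though \(P\) itself need not be generated by \(A\cup A'\cup B\).

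On your exactness worry: no ``exact part'' is needed, because \(\bar p\) is automatically the cokernel of its kernel. Let \(j\colon G\to P\) be the canonical map and \(\sigma\colon B\to P\) the common splitting. If \(q\colon P\to Q\) kills \(\ker\bar p\), then \(q\circ j\) kills \(\k(A)\), so \(q\circ j=u\circ p\) for some \(u\), since \(p=\mathrm{coker}(\k)\). Moreover \(u=u\circ p\circ s=q\circ\sigma\), and similarly on \(G'\). Hence \(q\) and \(q\circ\sigma\circ\bar p\) agree on \(G\) and \(G'\), so they are equal by the pushout property.
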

\begin{proof}
	It is easy to see that the coproduct is constructed via the pushout
	\[
		\begin{tikzcd}
			&G\arrow[ld,two heads, shift left,"p"]\arrow[rd]&A\arrow[l,hook]&\\
			B\arrow[ru,hook,shift left]\arrow[dr,hook,shift left]&&G*_BG'\arrow[ll,dashed, "\bar{p}"]&K(E+E')\arrow[l]\\
			&G'\arrow[ul,two heads, shift left,"p'"]\arrow[ru]&A'\arrow[l,hook]&
		\end{tikzcd}
	\]
	of \(B\inje G\) and \(B\inje G'\). The inclusion (\ref{equation:freededprod}) is trivial.
	Suppose now that \(B\) is a Dedekind-finite monoid. Take \(g\) in the kernel of \(\bar{p}\) arbitrary, we prove that \(g\) is of the form \(b_1a_1b_2a_2\cdots b_na_n\) with \(a_i\) in the disjoint union \(A\dot\cup A'\) and \(b_i\in B\) such that \(b_1b_2b_3\cdots b_n=\id_B\).
	We have \(g=g_1g_2'g_3g_4'\cdots g_{n+1}g_n'\) with \(g_i \in G\) and \(g_i'\in G'\), we have
	\begin{align*}
		\id_B & =\bar{p}(g)=p(g_1)p'(g_2')\cdots p(g_{n-1})p'(g_n').
	\end{align*}
	Since \(B\) is Dedekind finite, we have that every \(p(g_i)\) and \(p'(g_i')\) is in \(\inv(B)\). Therefore, it is easy to see that \(s\left(p(g_i)^{-1}\right)g_i\in A\) and \(s'\left(p'(g_i')^{-1}\right)g_i'\in A'\). Let \(s\left(p(g_i)^{-1}\right)g_i\eqcolon a_i\) and \(s\left(p(g_i)\right)\eqcolon b_i\) hence, we have \(b_ia_i=g_i\) for \(i\) odd and \(b_ia_i=g_i'\) for even \(i\):
	\[
		g=b_1a_1b_2a_2\cdots b_na_n.
	\]
	with \(b_1b_2\cdots b_n=\id_B\).
	Since \(B\) is Dedekind-finite, it is easy to prove that every cyclic permutation of this element is also trivial, i.e.\ (for fixed \(m\in \N\)) \(b_{\overline{i+1}}b_{\overline{i+2}}\cdots b_{\overline{i+n}}=\id_B\) for any \(i\in \N\) and \(\overline{i+m}\coloneq (i+m)\; (\text{mod } n)\). Hence observe
	\begin{align*}
		g & =b_1a_1b_2a_2b_3a_3\cdots b_na_n                                                                                                                    \\
		  & =b_1a_1(b_2b_3\cdots b_nb_1)b_2a_2(b_3b_4\cdots b_nb_1b_2)b_3a_3\cdots a_{n-1}(b_nb_1\cdots b_{n-1})b_na_n                                          \\
		  & =b_1a_1b_1^{-1}(b_1b_2)a_2(b_1b_2)^{-1}(b_1b_2b_3)a_3(b_1b_2b_3)^{-1}\cdots a_{n-1}(b_1b_2\cdots b_{n-1})^{-1}a_n                                   \\
		  & =\left(b_1a_1b_1{^{-1}}\right)\left((b_1b_2)a_2(b_1b_2)^{-1}\right)\cdots \left((b_1b_2\cdots b_{n-1})a_{n-1}(b_1b_2\cdots b_{n-1})^{-1}\right)a_n.
	\end{align*}
	We see that the element \(g\) is of the form described in (\ref{equation:freededprod}).
\end{proof}
\begin{theorem}\label{theorem:LACCdedekind}
	The kernel functor \(K_{\cat{Mon}}\colon \cat{SpEx}_B(\cat{Mon})  \to \cat{Mon}\) in (\ref{equation:KforMon}) preserves coproducts if and only if  \(B\) is a Dedekind-finite monoid.
\end{theorem}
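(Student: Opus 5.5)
We prove the two directions separately, using Lemma \ref{lemma:freekernelForm} for both. Recall that the coproduct \(E+E'\) in \(\cat{SpEx}_B(\cat{Mon})\) is the pushout \(G*_BG'\) over \(B\). For \(K_{\cat{Mon}}\) to preserve coproducts, the canonical comparison \(A*A'\to K(E+E')\) must be an isomorphism. This comparison is induced by the inclusions \(A\inje G\to G*_BG'\) and \(A'\inje G'\to G*_BG'\).

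\emph{Dedekind-finite \(\Rightarrow\) preservation.} Suppose \(B\) is Dedekind-finite. By the lemma,
\[K(E+E')=\spn{bwb^{-1}\mid w\in A*A',\, b\in \inv(B)},\]
since \(\Rinv(B)=\inv(B)\) in this case. We must show that this submonoid is isomorphic to \(A*A'\) via the canonical map, i.e.\ that the map is both surjective and injective.

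For surjectivity, conjugation by an invertible \(b\) preserves \(A\) and \(A'\) as subsets of \(G\) and \(G'\) respectively. Indeed, \(s(b)\,a\,s(b^{-1})\) lies in \(G\) and maps to \(\id_B\) under \(p\), hence lies in \(A\). The same argument works for \(A'\). Therefore \(bwb^{-1}\) is a product of elements \(bab^{-1}\), each of which lies in \(A\) or \(A'\), so every generator lies in the image of \(A*A'\).

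For injectivity, we build a retraction. We construct a monoid morphism \(G*_BG'\to (A*A')\rtimes B\) or, more concretely, we define an action of \(G\) and \(G'\) on the set \(A*A'\) compatible on \(B\) and evaluate at \(\id\). The cleanest route is the following. The pair \((A*A')\rtimes_\theta \inv(B)\) does not obviously work when \(B\) has non-invertible elements. Instead, we use the split extension \(L(A*A')\) and the universal property of the pushout to obtain a map \(G*_BG'\to X\) over \(B\), where \(X\) has kernel \(A*A'\). Composing gives a left inverse of \(A*A'\to K(E+E')\). I expect this injectivity step to be the main obstacle, since one must produce a split extension with kernel exactly \(A*A'\) receiving both \(G\) and \(G'\).

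\emph{Preservation \(\Rightarrow\) Dedekind-finite.} We argue by contraposition. Suppose \(xy=\id_B\) but \(yx\neq \id_B\). Take the simplest test objects, e.g.\ \(E=E'\) the trivial extension \(A\inje A\times B\splitepi B\) with \(A\) a nontrivial group such as \(\mathbb{Z}/2\mathbb{Z}\). Alternatively, take \(E=(0\to B\splitepi B)\) against a nontrivial \(E'\). In \(G*_BG'\), consider the element \(y\,a\,x\) with \(a\in A\). Then
\[\bar p(yax)=yx\,p(a)... \]
More precisely, \(\bar p(xay)=xy=\id\), so \(xay\) lies in the kernel. The strategy is to show that the kernel contains such elements \(x\,a\,y\) that are not in the image of \(A*A'\), or that they create extra relations, so that \(K(E+E')\not\cong A*A'\).

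To certify this, we map \(G*_BG'\) to a concrete monoid where \(xay\) is visibly not a product of elements of \(A\) and \(A'\). A natural choice is the monoid of endofunctions of a set on which \(B\) acts and \(A\), \(A'\) act as "coloured" permutations, making \(x\) non-injective and \(y\) injective but not surjective. In this model, an element of the image of \(A*A'\) acts bijectively on a suitable orbit, whereas \(xay\) does not, or vice versa. This yields the required failure of preservation.
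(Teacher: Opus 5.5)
Your surjectivity argument for ``Dedekind-finite \(\Rightarrow\) preservation'' is the same as the paper's. The other two steps, however, have genuine gaps.

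\textbf{Injectivity.} The route via \(L(A*A')\) points the wrong way. \(L\dashv K\) only gives maps \emph{out of} \(L(A*A')\) (the canonical one lands in \(E+E'\)), and in general there are no suitable maps into it. For example, for \(G=A\times B\) no morphism of split extensions \(G\to (A*A')*B\) can restrict to the inclusion on \(A\): \(a\) and \(b\) commute in \(G\) but not in the free product.

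The paper itself simply asserts that \(\psi\colon A*A'\to G*_BG'\) is injective. That still needs an argument, and one is easy once you note that you only need \emph{some} monoid receiving compatible maps from \(G\) and \(G'\). Put \(U=\inv(B)\). Dedekind-finiteness gives \(bc\in U\Rightarrow b,c\in U\): from \(bcd=\id_B=d'bc\) you get one-sided, hence two-sided, inverses of \(b\) and \(c\). Let \(U\) act on \(A\) by \(a\mapsto s(u)as(u^{-1})\), on \(A'\) likewise, and hence on \(A*A'\). Let \(M=\bigl((A*A')\rtimes U\bigr)\sqcup\{0\}\) with \(0\) absorbing. Define \(G\to M\) by \(g\mapsto \bigl(g\,s(p(g)^{-1}),p(g)\bigr)\) if \(p(g)\in U\), and \(g\mapsto 0\) otherwise. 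This is a monoid morphism: the ideal property handles products over non-units. It agrees on \(B\) with the analogous map on \(G'\), and the induced \(G*_BG'\to M\) sends \(w\in A*A'\) to \((w,\id_B)\). Hence \(\psi\), and therefore the comparison map, is injective.

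\textbf{Preservation \(\Rightarrow\) Dedekind-finite.} Here there is no argument yet, and the concrete suggestions fail. For \(G=A\times B\) and \(G'=A'\times B\), the pushout over \(B\) is \((A*A')\times B\), whose kernel is exactly \(A*A'\). The pushout of \(s'\colon B\to G'\) along the identity of \(B\) is \(G'\) itself, so the trivial extension changes nothing. Your candidate witness \(xay\) (with \(x,y\in B\), \(xy=\id_B\)) is the element \(s(x)\,a\,s(y)\) of \(G\) lying over \(\id_B\), so it already belongs to \(A\).

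What is missing is to sandwich one letter from \emph{each} kernel between the one-sided inverses, in extensions where \(B\) does not commute with the kernels. The paper uses the free extensions \(L(X)\) and \(L(Y)\) on one-generator free monoids. Their middle objects are \(X*B\) and \(Y*B\), and since \(L\) preserves coproducts, the coproduct has middle object \(X*Y*B\). Writing \(pq=\id_B\neq qp\) for your pair, take the element \(pxyq\in KL(X*Y)\). Counting \(x\)- and \(y\)-letters in normal forms, any factorization through \(KL(X)*KL(Y)\) must be \((pxb_2)(b_1yq)\) with \(pb_2=\id_B=b_1q\) and \(b_2b_1=\id_B\). This forces \(b_1=p\) and \(b_2=q\), hence \(qp=\id_B\), a contradiction. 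Your endofunction model would have to encode exactly this non-commutation, and as written it specifies nothing that does.
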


\begin{proof}
	``\(\Leftarrow\)'' Suppose \(B\) is a Dedekind-finite monoid, and suppose we have a coproduct of two extensions \(E\coloneq(A\inje G\splitepi B)\) and \(E'\coloneq( A'\inje G'\splitepi B)\):
	\[
		\begin{tikzcd}
			&G\arrow[ld,two heads, shift left,"p"]\arrow[rd]&&A\arrow[ll,hook]\arrow[dr,hook]&\\
			B\arrow[ru,hook,shift left]\arrow[dr,hook,shift left]&&G*_BG'\arrow[ll,dashed, "\bar{p}"]&K(E+E')\arrow[l]&A*A'\arrow[ll,"\psi"',dashed,bend left]\arrow[l,"\theta"',dotted]\\
			&G'\arrow[ul,two heads, shift left,"p'"]\arrow[ru]&&A'\arrow[ll,hook]\arrow[ur,hook]&
		\end{tikzcd}.
	\]
	It follows from the proof of Lemma~\ref{lemma:freekernelForm} that the coproduct is \(G*_BG'\splitepi B\), the pushout of the left square. The map \(\psi\) is constructed via the coproduct property (in \cat{Mon}), and \(\theta\) is constructed using the kernel property since \(K(E+E')\) is the kernel of \(\bar{p}\) while the image of \(\bar{p}\circ \psi\) is also zero.
	We claim that the kernel of \(\bar{p}\) is \(A*A'\),  i.e.\ \(\theta\) is an isomorphism.
	Since \(\psi\) is injective, so is \(\theta\). Suppose it is not surjective; then there is an element \(w\in G*_BG'\) for which \(\bar{p}(w)=\id_B\) while it is not contained in \(A*A'\).
	The kernel of \(\bar{p}\) is the following monoid (see Lemma \ref{lemma:freekernelForm}):
	\[K\left(E+E'\right)=\spn{bwb^{-1}\mid w\in A*A',\; b\in \Rinv(B)}\leq G*_BG.\]
	The map \(\theta\) is just the inclusion; for an arbitrary \(w=ba_1a'_1a_2a'_2\cdots a_na'_nb^{-1}\in K(E+E')\) we need to prove that it is in the image of \(\theta\). Clearly, \(ba_1b^{-1}\in K(E)=A\); hence we have
	\[
		\theta\left((ba_1b^{-1}) (ba_1'b^{-1}) \cdots (ba_nb^{-1})(ba_n'b^{-1}) \right)=ba_1b^{-1} ba_1'b^{-1} \cdots ba_nb^{-1} ba_n'b^{-1}=w.
	\]
	The last equality follows from  \(b^{-1}b=\id_B=bb^{-1}\).

	``\(\Rightarrow\)'' Suppose \(B\) is not a Dedekind-finite monoid, then there are \(p,q\in B\) such that
	\[
		pq=\id_B\neq qp.
	\]
	Now, consider the following monoids:
	\[
		X\coloneq \spn{x},\quad Y\coloneq \spn{y}.
	\]
	Since \(L\) (see (\ref{equation:LforMon})) is a left adjoint, it preserves coproducts; hence, \(L(X*Y)=L(X)+_{\cat{SpEx}_B(\cat{Mon})}L(Y)\). We will now prove that \(KL(X)*KL(Y)\ncong KL\left(X*Y\right)\).
	We have that:
	\[
		KL(X)=\spn{b_1x^{m_1}b_2x^{m_2}\cdots b_nx^{m_n}\mid n,m_i\in \mathbb{N},\;b_1b_2\cdots b_n=\id_B}\leq X*B,
	\]\[KL(Y)=\spn{b_1y^{m_1}b_2y^{m_2}\cdots b_ny^{m_n}\mid n, m_i\in \mathbb{N},\;b_1b_2\cdots b_n=\id_B}\leq Y*B,
	\]
	\[KL(X*Y)\supseteq\spn{bwb^{-1}\mid w\in X*Y,\;b\in \Rinv(B)}.
	\]
	%with \(\Rinv(B)\) being the set of right invertible elements in \(B\). This all simplifies like for \(X\) to \(KL(X)=\spn{p^nxq^n\mid n\in \mathbb{N}}\).
	Since \(KL(X)*KL(Y)\) is a coproduct, there is a unique monomorphism
	\begin{align*}
		KL(X)*KL(Y) & \inje KL(X*Y)
	\end{align*}
	which is just the inclusion. However, because \(qp\neq \id =pq\), elements such as \(pxyq\in KL(X*Y)\) are never reached. Since, it would be the image of an element \(pxb_2\cdot b_1yq\in KL(X)*KL(Y)\) such that \(b_2b_1=\id_B\) and \(pb_2=\id_B=b_1q\), it follows easily \(b_1=p\) and \(b_2=q\).
	This would not work for Dedekind-finite monoids because the left and right inverses are the same and hence \(pxqpyq=pxyq\).
\end{proof}
\begin{corollary}\label{corrollary:LACCDEDE}
	The category of monoids is not locally algebraically cartesian closed. Moreover, the functor \(K_{\cat{Mon}}\colon \cat{SpEx}_B(\cat{Mon})  \to \cat{Mon}\) from (\ref{equation:KforMon}) has a right adjoint if and only if \(B\) is a Dedekind-finite monoid.
\end{corollary}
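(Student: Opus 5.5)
The corollary has two parts: a statement for each fixed \(B\), and the failure of LACC. I would prove the first and deduce the second from it.

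\emph{Only if.} Suppose \(K_{\cat{Mon}}\colon \cat{SpEx}_B(\cat{Mon})\to\cat{Mon}\) has a right adjoint. Then \(K_{\cat{Mon}}\) is a left adjoint, so it preserves all colimits that exist in its domain. By Lemma~\ref{lemma:freekernelForm}, \(\cat{SpEx}_B(\cat{Mon})\) has binary coproducts, so \(K_{\cat{Mon}}\) preserves them. The direction ``\(\Rightarrow\)'' of Theorem~\ref{theorem:LACCdedekind} then forces \(B\) to be Dedekind-finite.

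\emph{Not LACC.} It suffices to exhibit one monoid that is not Dedekind-finite. The bicyclic monoid \(\spn{p,q\mid pq=\id}\) satisfies \(pq=\id\neq qp\). So \(K_{\cat{Mon}}\) has no right adjoint for this \(B\), and \(\cat{Mon}\) is not LACC.

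\emph{If.} Let \(B\) be Dedekind-finite. I would construct the right adjoint explicitly rather than go through an adjoint functor theorem, since size and solution-set conditions are awkward here. Dedekind-finiteness gives \(\Rinv(B)=\inv(B)\). The computation in the proof of Lemma~\ref{lemma:freekernelForm} (ordered product of elements has image \(\id_B\), so every factor is invertible) shows that for any split extension \(A\inje G\splitepi B\), every \(g\) with \(p(g)\in\inv(B)\) factors uniquely as \(a\cdot s(p(g))\), with \(a=g\,s(p(g)^{-1})\in A\).

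For a monoid \(X\), I would define \(R(X)\) as the pullback along \(\inv(B)\inje B\) of the split extension \(X^{\inv(B)}\rtimes\inv(B)\) over \(\inv(B)\). Here the action is \((h\cdot u)(v)=h(uv)\), which is the group Kaluzhnin--Krasner construction restricted to the units. To turn this into an extension over all of \(B\), I would take
\[
R(X)=\bigl(X^{\inv(B)}\bigr)\rtimes B ,
\]
where a non-invertible \(b\) acts trivially, i.e.\ sends every \(h\) to the constant map \(\id\). This is well defined because in a Dedekind-finite monoid a product \(b b'\) is invertible only if both \(b\) and \(b'\) are.

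The counit \(KR(X)=X^{\inv(B)}\to X\) is evaluation at \(\id_B\). Given \(f\colon A\to X\), the transpose \(G\to R(X)\) sends \(g\) to \(\bigl(u\mapsto f\bigl(s(u)\,g\,s(u\,p(g))^{-1}\bigr),\,p(g)\bigr)\) when \(p(g)\) is invertible. When \(p(g)\) is not invertible, the first component is the constant map \(\id\), since then \(u\,p(g)\) is also non-invertible.

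The main obstacle is checking that this transpose is a monoid morphism and that it is unique. Uniqueness should follow because any morphism of split extensions over \(B\) is determined by its values on \(A\) and on \(s(B)\), and \(s(B)\) must map to \(s'(B)\). Multiplicativity in the mixed invertible/non-invertible cases needs care, and that is where Dedekind-finiteness enters.

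If this explicit construction fails in some case, I would fall back on a special adjoint functor theorem argument. That argument would combine coproduct preservation from Theorem~\ref{theorem:LACCdedekind} with preservation of coequalizers, the latter checked via the same description of kernels.
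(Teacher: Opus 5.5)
Your ``only if'' direction and the non-LACC conclusion (via the bicyclic monoid) match the paper's argument. The gap is in the ``if'' direction. The object \(X^{\inv(B)}\rtimes B\), with non-units acting by \(h\mapsto 1_X\), is not a right adjoint, and the mixed-case multiplicativity you postponed really does fail. Take \(B=(\N,+)\), so \(\inv(B)=\{0\}\) and your \(R(X)\) is \(X\rtimes\N\). Take the direct product \(A\inje A\times\N\splitepi\N\) with \(s(n)=(1_A,n)\), and any \(f\colon A\to X\). A morphism of split extensions over \(f\) must send \((a,0)\mapsto(f(a),0)\) and \((1_A,1)\mapsto(1_X,1)\). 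Since \((a,0)(1_A,1)=(1_A,1)(a,0)\), comparing \((f(a),0)(1_X,1)=(f(a),1)\) with \((1_X,1)(f(a),0)=(1_X,1)\) forces \(f(a)=1_X\); with the action on the other side, the two products just swap. So for nontrivial \(f\) there is no morphism at all, and your object is not terminal in \((K\downarrow X)\). Your uniqueness argument is also unsound at this generality, because a split extension of monoids need not be generated by \(A\cup s(B)\). For example, take \(\spn{a,t,y\mid ay=at}\) over \(\N\) with \(p(a)=0\), \(p(t)=p(y)=1\), \(s(1)=t\); then \(y\notin\spn{a,t}\). Generation by \(A\cup s(B)\) is guaranteed for Schreier split extensions, not in general.

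If you want the explicit route, the missing idea is that the fibres of the cofree object over non-units must be single absorbing points. In a Dedekind-finite monoid, \(B\setminus\inv(B)\) is a two-sided ideal: if \(bc\) is a unit, then \(b\) has a right inverse and \(c\) a left inverse, so both are units. The following object works, with non-units multiplying as in \(B\):
\[
R(X)=\bigl(X^{\inv(B)}\rtimes\inv(B)\bigr)\sqcup\bigl(B\setminus\inv(B)\bigr),\qquad (h,u)\cdot n=un,\quad n\cdot(h,u)=nu.
\]
The transpose is your group formula \(u\mapsto f\bigl(s(u)\,g\,s(u\,p(g))^{-1}\bigr)\) when \(p(g)\) is a unit, and \(g\mapsto p(g)\) otherwise. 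Mixed products are handled by the ideal property, which is exactly where Dedekind-finiteness enters. Uniqueness follows from \(p^{-1}(\inv(B))=A\cdot s(\inv(B))\) together with the singleton fibres.

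Your fallback is the paper's actual route. The paper combines Theorem~\ref{theorem:LACCdedekind} with the cited result of Gray that, in a variety, the kernel functor has a right adjoint once it preserves binary coproducts. As you state it, though, the fallback is incomplete. The theorem only gives binary coproducts, and coequalizer preservation is not something the kernel description delivers. What is needed is the following. Filtered colimits and reflexive coequalizers of points over \(B\) are computed on underlying sets, so they commute with taking the fibre over \(\id_B\). Arbitrary coequalizers and coproducts reduce to these plus binary coproducts. Local presentability then gives the adjoint.
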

\begin{proof}
	The first part follows by the existence of non-Dedekind-finite monoids.

	For the second part:

	``\(\Leftarrow\)'' By Theorem \ref{theorem:LACCdedekind}, \(K_{\cat{Mon}}\) preserves coproducts also the category of monoids is a variety of universal algebra. Thus, by \cite[Theorem 2.9 \& Remark 2.2]{Gray1}, the functor has a right adjoint.

	``\(\Rightarrow\)'' If \(K_{\cat{Mon}}\) has a right adjoint, then since it is a left adjoint, it preserves coproducts; hence, by Theorem \ref{theorem:LACCdedekind} \(B\) has to be a Dedekind-finite monoid.
\end{proof}
\subsection{\text{\(S\)-LACC} property for Schreier split extensions of monoids}
In this section, we recall that for an arbitrary \(B\in Ob(\cat{Mon})\) the functor \begin{align*}
	K\colon \cat{SchSpEx}_B(\cat{Mon})  \to \cat{Mon}\colon
	\left(A\inje G\splitepi B \right)                        \mapsto A
\end{align*}
has a right adjoint.
This was first shown by Martins-Ferreira, Montoli and Sobral in \cite[Proposition 4.4]{AndreaMontoi}  in 2016 (see also \cite[Definition 4.1]{AndreaMontoi} for the definition of \(S\)-LACC). For completeness, here we give a
self-contained proof.
\begin{proof}
	The existence of a right adjoint to \(K\) is equivalent as that for every
	\(X\in Ob(\cat{Mon})\) the comma
	category \(\left(K \downarrow X \right)\) has a terminal object (see, for instance, \cite[Lemma 4.5.1]{CatContext}). Consider the following Schreier split extension:
	\[
		\begin{tikzcd}
			X^B \arrow[rr,"\k_X",hook]&&X^B\rtimes B \arrow[rr,two heads,"p_X",shift left]
			&&B\arrow[ll,hook,shift left,"s_X"]
		\end{tikzcd},
	\]
	where \(X^B\) denotes the set of \cat{Set}-maps from \(B\)  to
	\(X\), with multiplication:
	\[
		h(-)\cdot_{X^B}h'(-)\coloneq (b\mapsto h(b)\cdot_Xh'(b)).
	\]
	This defines a monoid.
	We define an action of \(B\) on \(X^B\), as follows:
	\[
		\acts \colon X^B\times B \to X^B\colon  \left(h(-),b\right)\mapsto h(-\cdot b)\eqcolon h^{b}(-).
	\]
	The multiplication in \(X^B\rtimes B\eqcolon X\wr B\) occurs as follows:
	\[
		\left(h(\cdot),b \right)\cdot_{X^B\rtimes B} \left(h'(\cdot),b'\right)\coloneq
		\left(h\cdot_{X^B}h'^b,bb'\right).
	\]
	We claim that the following object of \(\left(K\downarrow X\right)\) is terminal in this category:
	\[
		\begin{tikzcd}
			X&&X^B\arrow[ll,"\eval_{\mathbb{1}_B}"] \arrow[rr,"\k_X",hook]&&X^B\rtimes B \arrow[rr,two
				heads,"p_X",shift left]
			&&B\arrow[ll,hook,shift left,"s_X"],
		\end{tikzcd}
	\]
	with \(\eval_{\id_B}\colon h(-)\mapsto h(\id_B)\).
	It is easy to verify that \(\k\), \(p\) and \(s\) are monoid morphisms, that it is a Schreier split extension, and that \(\eval_{\id_B}\) is a morphism.
	To prove that this is a terminal object, consider another object:
	\[
		\begin{tikzcd}
			&&A \arrow[lld,"f"']\arrow[rr,"\k",hook]&&G\arrow[rr,two heads,"p",shift left]
			&&B\arrow[ll,hook,shift left,"s"]\\
			X&&&&&&\\
			&&X^B\arrow[llu,"\eval_{\id_B}"] \arrow[rr,"\k_X",hook]&&X^B\rtimes B
			\arrow[rr,two heads,"p_X",shift left]
			&&B\arrow[ll,hook,shift left,"s_X"].
		\end{tikzcd}
	\]
	We have \(G=A\rtimes B\) for a certain monoid action\footnote{Such that
		the multiplication is \((a,b)\cdot_G(a',b')\coloneq (aa'^b,bb')\).} \(B\to
	\text{End}_{\cat{Mon}}(A)\) (since it is a Schreier split extension, see \cite[3.5.
		Semidirect products]{GaloisForMonoids}, \cite[Chapter 5]{BookSMR} and \cite{MontoliAndSobra}). We will now define
	\(\phi\colon A\to X^B\) which makes the following diagram commute:
	\begin{equation}\label{diagram:LACC}
		\begin{tikzcd}
			&&A \arrow[lld,"f"']\arrow[dd,"\phi"]\arrow[rr,"\k",hook]&&A\rtimes B\arrow[dd,"\phi\times
				id_B"]\arrow[rr,two heads,"p",shift left]
			&&B\arrow[ll,hook,shift left,"s"]\arrow[dd,equal]\\
			X&&&&&&\\
			&&X^B\arrow[llu,"\eval_{\id_B}"] \arrow[rr,"\k_X",hook]&&X^B\rtimes B
			\arrow[rr,two heads,"p_X",shift left]
			&&B\arrow[ll,hook,shift left,"s_X"]
		\end{tikzcd}.
	\end{equation}
	We will see that \(\phi\) is uniquely determined (hence, our object is terminal).  Denote \(h_a\coloneq \phi(a)\). To
	make sure the triangle in diagram (\ref{diagram:LACC}) commutes, we have to define \(\phi(a)\) such that \(f(a)= \eval_{\id_B}\left(\phi(a)\right)\),
	hence \(h_a\colon \id_B\mapsto f(a)\). The first rectangle also has to
	commute, which implies \(\phi\) must be equivariant for the action of \(B\). Hence,
	\[
		h_a^{\tilde{b}}=h_{a^{\tilde{b}}} \quad \text{for any \(\tilde{b}\in B\)}.
	\]
	Knowing this, \(h_a\) is completely determined since
	\begin{align*}
		\hspace{-0.6cm}h_a(b) & =h_a(
		\id_B\cdot b)=\eval_{\id_B}(h_a(- b))=\eval_{\id_B}\left(h_a^b(-)\right)                  \\
		                      & =\eval_{\id_B}\left(h_{a^b}(-)\right)=h_{a^b}(\id_B)=f(a^b)\in X.
	\end{align*}
	One easily verifies that \(\phi\) and \(\phi\times id_B\) are \cat{Mon}-morphisms.
	Hence, we have a right adjoint of \(K\) being:
	\[
		R\colon \cat{Mon}  \to \cat{SchSpEx}_B(\cat{Mon})\colon
		X    \mapsto \left(X^B\inje X^B\rtimes B \splitepi B\right). \qedhere
	\]
\end{proof}
The previous theorem also holds for general split extensions if \(B\) is a group (in addition to Corollary \ref{corrollary:LACCDEDE}, since every group is a Dedekind-finite monoid), because if \(B\) is a group, then \(\cat{SchSpEx}_B(\cat{Mon})=\cat{SpEx}_B(\cat{Mon})\) (which can be easily verified).
\begin{definition}
	For any two monoids \(A\) and \(B\), we call the monoid defined in the previous proof \(A^B\rtimes B\eqcolon A\wr B\), the \emph{Schreier wreath product} of \(A\) and \(B\).
\end{definition}
An equally interesting object is the reverse wreath product, see Remark \ref{remark:naKK}.
\section{Kaluzhnin--Krasner for Schreier extensions}
In this section, we prove the Kaluzhnin--Krasner embedding theorem for Schreier extensions (which are not necessarily split), that is,\
\begin{theorem}\label{theorem:KKforSchreier}
	For any Schreier extension of monoids \(0\to A\to G \to B \to 0\), the monoid \(G\) can be
	embedded into the Schreier wreath product via a monoid monomorphism \(\phi_G\colon G
	\to A\wr B\).
\end{theorem}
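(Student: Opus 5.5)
The plan is to imitate the classical construction $h_g(b)=s(b)gs(bp(g))^{-1}$, with the missing inverse replaced by the unique factorisation in the fibres of $p$. Fix a Schreier extension $A\overset{\k}{\inje}G\overset{p}{\surj}B$ and representatives $g_b\in p^{-1}(b)$ as in Definition~\ref{definition:Schreier}(ii).

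First I normalise the representative of the unit. I may take $g_{\id_B}=\id_G$. Indeed $p^{-1}(\id_B)=\k(A)$, since $\k=\ker(p)$. Every element of $\k(A)$ is written as $\k(a)\cdot\id_G$, and this $a$ is unique because $\k$ is injective. So changing only $g_{\id_B}$ to $\id_G$ keeps the Schreier property.

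Next I define $\phi_G$. For $g\in G$ and $b\in B$, the element $g_b g$ lies in the fibre $p^{-1}\bigl(bp(g)\bigr)$. The Schreier property therefore gives a unique $h_g(b)\in A$ with
\[
g_b\, g=h_g(b)\, g_{bp(g)} .
\]
I set $\phi_G(g)\coloneq\bigl(h_g,p(g)\bigr)\in A^B\rtimes B=A\wr B$.

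The main step is to show $\phi_G$ is a monoid morphism, and here uniqueness replaces all inverse manipulations.
\begin{itemize}
\item For the unit: $g_b\id_G=\id_A\, g_b$, so uniqueness forces $h_{\id_G}(b)=\id_A$.
\item For products, take $g,g'\in G$ and compute
\[
g_b\, gg'=h_g(b)\, g_{bp(g)}\, g'=h_g(b)\,h_{g'}\bigl(bp(g)\bigr)\, g_{bp(g)p(g')} .
\]
\item The last term is also $h_{gg'}(b)\, g_{bp(gg')}$. Uniqueness of the $A$-component in the fibre $p^{-1}\bigl(bp(gg')\bigr)$ then gives $h_{gg'}(b)=h_g(b)\,h_{g'}^{p(g)}(b)$.
\item This is exactly the product in $A^B\rtimes B$ with the action $h\mapsto h(-\cdot b)$. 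Together with $p(gg')=p(g)p(g')$, it shows $\phi_G(gg')=\phi_G(g)\phi_G(g')$.
\end{itemize}
I expect this verification, in particular checking that the factorisation lands in the correct fibre so that uniqueness applies, to be the crux. It is also where a weakly Schreier extension would break down, since without uniqueness $h_g$ is not well defined.

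For injectivity, suppose $\phi_G(g)=\phi_G(g')$. Then $p(g)=p(g')$ and $h_g(\id_B)=h_{g'}(\id_B)$. Because $g_{\id_B}=\id_G$, we get
\[
g=h_g(\id_B)\,g_{p(g)}=h_{g'}(\id_B)\,g_{p(g')}=g' .
\]

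Finally, I check that $\phi_G$ is a morphism of extensions. By construction $p_A\circ\phi_G=p$. For $a\in A$ we have $p(\k(a))=\id_B$, so $\phi_G$ sends $\k(A)$ into the kernel $A^B$. This yields $\phi_A\colon A\to A^B$ with $\phi_A(a)(b)$ the unique element satisfying $g_b\,a=\phi_A(a)(b)\,g_b$. The resulting map $(\phi_A,\phi_G)$ is a monomorphism in $\cat{SchEx}_B(\cat{Mon})$ into the Schreier split extension $A^B\inje A\wr B\splitepi B$.
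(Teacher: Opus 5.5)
Your proposal is correct and follows essentially the same route as the paper. Both use the normalised section $g_{\id_B}=\id_G$, define $h_g(b)$ by the unique factorisation of $g_b g$ in the fibre over $bp(g)$, derive $h_{gg'}(b)=h_g(b)\,h_{g'}(bp(g))$ from uniqueness, and get injectivity by evaluating at $b=\id_B$. The only differences are small and in your favour: you justify the normalisation and check the unit explicitly, which the paper omits, and you obtain $\phi_A$ by restricting $\phi_G$ to the kernel instead of defining it separately and checking that it is a morphism.
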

\begin{proof}[Sketch of proof:]\renewcommand{\qedsymbol}{$\triangle$}
	The same map as for groups does not directly work because it requires inverses. However, conjugation can be performed without using inverses. The map \(a\mapsto gag^{-1}\eqcolon \bar{a}\) corresponds to the identity map \(gA\to Ag\colon ga\mapsto \bar{a}g\). For submonoids, cosets do not need to be in bijection with one another, but for Schreier extensions, we have a suitable substitute, which we exploit below.
\end{proof}
\begin{proof}
	Consider an arbitrary Schreier extension:
	\[
		A\inje G \surj B.
	\]
	We prove that \(A^B\inje A^B\rtimes B\splitepi B\) is a universal receptor, i.e.\ we
	need to find monoid morphisms \(\phi_A\) and \(\phi_G\) such that the following
	commutes, where \(s\) is just a ``natural'' \cat{Set}-section which we will define
	later:
	\begin{equation}\label{diagram:KKMon}
		\begin{tikzcd}
			A \arrow[rr,"\k",hook]\arrow[dd,"\phi_A"]&&G\arrow[dd,"\phi_G"]\arrow[rr,two
				heads,"p",shift left]
			&&B\arrow[ll,hook,shift left,"s",dashed]\arrow[dd,equal]\\
			&&&&\\
			A^B \arrow[rr,"\k_A",hook]&&A^B\rtimes B
			\arrow[rr,two heads,"p_A",shift left]
			&&B\arrow[ll,hook,shift left,"s_A"]
		\end{tikzcd}.
	\end{equation}
	We are inspired by the construction of the classical Kaluzhnin--Krasner theorem.
	The next step is to find an appropriate alternative for \(b a  b^{-1}\), which no longer makes sense. However, any Schreier extension satisfies the next property: for each \(b\in B\), there is an element \(g_b\in p^{-1}(b)\) such that for any other element
	\(g\in p^{-1}(b)\) there is a unique element \(a_g\in A\) for which \(a_gg_b=g\). In
	case \(b=\id_B\) we can choose \(g_{\id_B}\coloneq \id_G\). Hence, there is a section (which is just a \cat{Set} map):
	\begin{equation}\label{equation:defS}
		s\colon B\inje G\colon b\mapsto g_b.
	\end{equation}
	This property also defines an inclusion map (which is in fact a bijection):
	\begin{align*}
		p^{-1}(b)\to Ag_b\colon g\mapsto a_gg_b=a_gs(b),\quad \text{where \(g=a_gg_b\).}
	\end{align*}
	Because \(s(b)A\subseteq p^{-1}(b)\), there is a map:
	\begin{align*}
		s(b)A\to As(b)\colon s(b)a\mapsto \bar{a}s(b),
	\end{align*}
	This map is just the inclusion map, but now we focus on \(\bar{a}\) which is unique with the property that \(s(b)a=\bar{a}s(b)\). Hence the element \(s(b)\) provides us with a well-defined map \(A\to A\colon a\mapsto \bar{a}\).
	This defines an ``action''\footnote{This is not really an action since there is no reason to assume that \((a^b)^{b'}=a^{bb'}\).} of \(B\) on \(A\). Since
	\(\bar{a}\) is unique by the Schreier property, this is well-defined; hence
	\(\bar{a}\eqcolon a^b\). Now, we define \(\phi_A\) as:
	\begin{align*}
		\phi_A \colon A \to A^B\colon a\mapsto \left(h_a\colon b\mapsto a^b\right).
	\end{align*}
	We must verify that \(\phi_A\) is a monoid morphism. We have
	\begin{align*}
		\phi_A(aa')(b)= (aa')^b
		\iff
		s(b)aa'=(aa')^bs(b)
	\end{align*}
	for all \(b\in B\). However, for such a fixed \(b\), we have \(s(b)aa'=a^bs(b)a'=a^ba'^bs(b)\).
	Therefore, \((aa')^b=a^ba'^b\) because, by the Schreier property, there is only one element in \(A\) such that this element multiplied by \(s(b)\) gives \((aa')^bs(b)\). Hence
	\(\phi_A(aa')(b)=(aa')^b=a^ba'^b=\left(\phi_A(a)\phi_A(a')\right)(b)\). Now we define
	\(\phi_G\):
	\begin{align*}
		\phi_G \colon G \to A^B\rtimes B\colon g\mapsto \left( h_{g},p(g)\right),
	\end{align*}
	where we define \(h_g\) as follows:
	\begin{align*}
		h_g(b)\coloneq  a_{g,b},
	\end{align*}
	where \(a_{g,b}\) is the following well-defined element: consider \(g_bg\in G\) (here
	\(g_b\coloneq s(b)\)).  The image under \(p\) of this element is \(p(g_bg)=bp(g)\).
	Hence, by definition, \(s(bp(g))\coloneq g_{bp(g)}\). By
	the Schreier property there is a unique element \(a_{g,b}\) for which
	\(a_{g,b}\cdot g_{bp(g)}=g_b\cdot g\in p^{-1}(bp(g))\).
	We prove that \(\phi_G\) is a morphism:
	\begin{align*}
		\phi_G(gg')(b)=(h_{gg'},p(gg'))=(h_gh_{g'}^{p(g)},p(g)p(g'))\coloneq
		(h_g,p(g))(h_{g'},p(g')),
	\end{align*}
	where the second equality holds because \(p\) is a morphism. We need to prove for arbitrary \(b\in B\) that
	\(h_{gg'}(b)\overset{?}{=}h_g(b)\cdot h_{g'}^{p(g)}(b)=h_g(b)\cdot
	h_{g'}(bp(g))\). We have
	\begin{align*}
		h_{gg'}(b)=a_{gg',b}                                & \Leftrightarrow a_{gg',b}g_{bp(gg')}=g_bgg', \\
		h_g(b)\cdot h_{g'}(bp(g))=a_{g,b}\cdot a_{g',bp(g)} & \Leftrightarrow
		a_{g,b} g_{bp(g)}=g_bg
	\end{align*}
	and \(a_{g',bp(g)}g_{bp(g)p(g')}=g_{bp(g)}g'\). Multiplying both sides of the second equality by \(a_{g,b}\), we find:
	\begin{align*}
		a_{g,b}\cdot a_{g',bp(g)}
		g_{bp(g)p(g')} & =a_{g,b}g_{bp(g)}g'. \\
		\intertext{Then using that \(a_{g,b} g_{bp(g)}=g_bg
			\), we see:} a_{g,b}\cdot a_{g',bp(g)}
		g_{bp(gg')}    & =g_bgg'.
	\end{align*}
	By the definition of Schreier extension \(a_{gg',b}\) is the unique such that \(a_{gg',b}g_{bp(gg')}=g_bgg'\), hence
	\(a_{gg',b}=a_{g,b}\cdot a_{g',bp(g)}\). So we have proved that \(\phi_G\) is a morphism.
	We still have to verify that the two squares pointing to the right in the diagram (\ref{diagram:KKMon}) commute.
	The second square trivially commutes by the definition of \(\phi_G\). Now we have to prove
	\(\phi_G\circ \k=\k_A\circ\phi_A\), i.e.\ that for all \(a\in A\),
	\begin{align*}
		\k_A(\phi_A(a))=(h_a,\id_B) & =(h_{\k(a)},p(\k(a)))=\phi_G(\k(a))
	\end{align*}
	or, equivalently,
	\begin{align*}
		(h_a,\id_B) & =(h_{\k(a)},\id_B)
	\end{align*}
	which holds if
	\begin{align*}
		\forall b\in B \colon h_a(b) & =h_{\k(a)}(b).
	\end{align*}
	We have that \(h_a(b)\) is the unique element \(a^b\) for which \(s(b)a=a^bs(b)\) while
	\(h_{\k(a)}(b)\) is the unique element \(a_{\k(a),b}\) for which	\(s(b)\k(a)=a_{\k(a),b}\cdot g_{bp(\k(a))}\). However, since \(a_{\k(a),b}\cdot
	g_{bp(\k(a))}=a_{\k(a),b}g_{b\id_B}=a_{\k(a),b}g_b=a_{\k(a),b}s(b)\), we have
	\(h_a(b)=a^b=a_{\k(a),b}=h_{\k(a)}(b)\). Finally, we prove that \(\phi_G\) is a monomorphism, or
	equivalently, injective as a map. Suppose
	\((h_g,p(g))=(h_{g'},p(g'))\), this implies \(p(g)=p(g')\) and \(h_g=h_{g'}\). So for every \(b\) we have \(a_{g,b}=a_{g',b}\) and hence
	for all \(b\in B\) we have \(g_bg=a_{g,b}g_{bp(g)}=a_{g',b}g_{bp(g')}=g_bg'\) for
	every \(b\). Now choose \(b=\id_B\); we then\footnote{Remember we were able to
		choose \(s(\id_B)=g_{\id_B}\coloneq \id_G\).} have:
	\[
		g=\id_Gg=s(\id_B)g=g_{\id_B}g=g_{\id_B}g'=s(\id_B)g'=\id_Gg'=g'.\qedhere
	\]

\end{proof}
\begin{remark}\label{remark:naKK}
	\begin{enumerate}[label= (\roman*)]
		\item Notice that we only have proven Theorem \ref{theorem:KKforSchreier} for right Schreier extensions; however, the same holds, mutatis mutandis, for left Schreier extensions (or left homogeneous extensions), however, with a different recipient than \(A^B\rtimes B\), namely \(A^B\overline{\rtimes} B\) the reverse\footnote{Corresponding to the right action: \(A\overline{\rtimes} B\) with multiplication \((a,b)\cdot (a',b')\coloneq (a^{b'}a',bb')\) and \((a^{b})^{b'}\coloneq a^{(bb')}\), while for the left action \((a^{b})^{b'}= a^{(b'b)}\).} semidirect product (name first used in \cite{LmARSemi}). Similarly, one could define the reverse Schreier wreath product \(A\overline{\wr} B\coloneq A^B\overline{\rtimes}B\) using the right adjoint of the kernel functor of left Schreier extensions
		      \[
			      K\colon \cat{L-ScSpEx}_B(\cat{Mon})\to \cat{Mon}.
		      \]
		\item The proof of Theorem \ref{theorem:KKforSchreier} fails for weakly Schreier extensions; we could still define an ``action'' of \(B\) on \(A\); however \(\phi_A\) is not necessarily a morphism since it is not clear whether \((aa')^b=a^ba'^b\)  for all \(a\), \(a'\in A\) and \(b\in B\).
		\item In the proof of Theorem \ref{theorem:KKforSchreier}, the morphism \(\phi_G\colon G\inje A\wr B\) is defined using the choice of the section \(s\colon b\mapsto g_b\) (see (\ref{equation:defS})), however this element \(g_b\) does not need to be unique to satisfy the definition of a Schreier extension (Definition \ref{definition:Schreier}). Hence, the morphism \(\phi_G\) does not need to be unique (just as for groups). However, unlike for groups, for monoids the proof fails for \(s\) an arbitrary section.
		\item Just as for groups, the Kaluzhnin--Krasner theorem allows us to get a monomorphism of group extensions (see (\ref{equation:MonoExG}) and \cite{BXTkrasner}). Our proof of the Kaluzhnin-Krasner theorem for Schreier extensions gives us this result for monoids.
	\end{enumerate}
\end{remark}
\begin{example}
	We continue from Example \ref{example:Schreier} (ii), we construct a monomorphism \(\phi_G\colon (\mathbb{N},+)\to \mathbb{N}^{\mathbb{Z}/2\mathbb{Z}}\rtimes \mathbb{Z}/2\mathbb{Z}\). The section \(s\) defined in the proof of Theorem \ref{theorem:KKforSchreier}  gives us \(s\colon \mathbb{Z}/2\mathbb{Z}\to \mathbb{N}\colon 0,1\mapsto 0,1\) and as an embedding into \(\mathbb{N}^{\mathbb{Z}/2\mathbb{Z}}\rtimes \mathbb{Z}/2\mathbb{Z}\) the following:
	\begin{align*}
		\phi_G\colon n\mapsto\left( \begin{cases}
				                            \left(h_n\colon 0,1\mapsto \frac{n}{2},\frac{n}{2}\right)      & n\equiv 0\; \text{(mod 2)} \\
				                            \left(h_n\colon 0,1\mapsto  \frac{n-1}{2},\frac{n+1}{2}\right) & n\equiv 1\; \text{(mod 2)}
			                            \end{cases},n\; \text{(mod 2)}\right).
	\end{align*}
	Since \(h_n(\varepsilon)\) is the unique element \(a_{n,\varepsilon}\) in the kernel \(\N\) such that \(\k(a_{n,\varepsilon})+s(\varepsilon+ p(n))=s(\varepsilon)+n\).
\end{example}
\subsection{\text{Theorem \ref{theorem:KKforSchreier}} cannot be generalized to weakly Schreier extensions}\label{subsec:Weakly}
In this section, we write \((\overline{\mathbb{N}},+)\) for a copy of \((\mathbb{N},+)\), and consider the following extension:
\begin{equation}\label{equation:weakly}
	\begin{tikzcd}
		(\N,+)\arrow[r,"\kappa",hook]&\left(\N\sqcup_{0} \overline{\N},*\right)\arrow[r,"\pi",two heads]&(\overline{\N},+)
	\end{tikzcd},
\end{equation}
where \(\kappa\) is the inclusion and \(\pi\) is the projection. The monoid\footnote{In this monoid we identify the two units of the disjoint monoids, i.e.\ \(0=\overline{0}\).} \(\left(\N\sqcup_{0} \overline{\N},*\right)\) has the following operation:
\[
	n*m=n+m,\quad \overline{n}*\overline{m}=\overline{n+m},\quad  \text{if \(\overline{n}\neq 0\): }\;\overline{n}*m=\overline{n}=m*\overline{n}.
\]
By \cite{FaulWeak} such a construction yields a weakly Schreier extension.
We prove that there is no monomorphism from \(\N\sqcup_{0} \overline{\N}\) to \(\N^{\overline{\N}}\rtimes \overline{\N}\). If there were an embedding, there would be two nontrivial elements \((h,\overline{n})\) and \((g,\overline{m})\) in \(\N^{\overline{\N}}\rtimes \overline{\N}\), such that if we add them, we again have \((g,\overline{m})\) (since there are elements that do this in \(\N\sqcup_0\overline{\N}\) e.g.\ \(2*\overline{5}=\overline{5}\)), hence
\begin{align*}
	(g,\overline{m})=(h,\overline{n})\cdot (g,\overline{m}) & =\left(h(-)+g(-+\overline{n}),\overline{n}+\overline{m}\right)
\end{align*}
which implies \(\overline{n}=0\) and, finally, \(h(-)=\id_{\N^{\overline{\N}}}\). The last equality comes from the fact that \(\N\) and hence \(\N^{\overline{\N}}\) allows for cancellation. We obtain a contradiction because we assumed that \((h,\overline{n})\) was non-trivial.
However, this only proves that we cannot expand Theorem \ref{theorem:KKforSchreier} to weakly Schreier extensions; it may be possible to find a different wreath product that can be used for a Kaluzhnin--Krasner theorem. Moreover, in our example \((\overline{\N},+)\) is a Dedekind-finite monoid; hence, by Theorem \ref{theorem:LACCdedekind} we proved that the functor (\ref{equation:LMon}) has a right adjoint \(R\) with \(B=\overline{\N}\). Since (\ref{equation:weakly}) is a split extension, there is a unique morphism in \(\cat{SpEx}_B(\cat{Mon})\) from \(\N\inje \N\sqcup \overline{\N} \splitepi \overline{\N}\) to \(R(\N)\), while \(R(\N)\) defines a different wreath product. However, we did not give an explicit construction, this is more involved (see \cite[Theorem 2.9]{Gray1}).
\section{Further questions and remarks}\label{Section Further Questions}

An obvious question is whether we can generalize this to other categories, such as semigroups, near-rings, semi-rings and rings.\ However, the category of semigroups is not pointed (hence kernels and cokernels are not well defined).
For semi-rings, a similar equivalence between actions and Schreier extensions holds \cite{MontoliAndSobra}; however, the authors of that article prove that the category of semi-rings is not LACC \cite{AndreaMontoi}, not even if we restrict to Schreier extensions \cite[Proposition 6.1.6]{BookSMR}.
The category of (non-unital) rings is not LACC \cite[Proposition 6.7]{Gray2012}, but it could still be possible that there is a Kaluzhnin--Krasner theorem for (non-unital) rings. This would be surprising since, at least for groups, LACC is ``equivalent'' to the Kaluzhnin--Krasner theorem, in the sense that either can be deduced from the other. For a semi-abelian category the LACC property implies a Kaluzhnin--Krasner embedding theorem \cite[Theorem 3.3]{BXTkrasner}.
The category of V-groups is LACC \cite{Vgroups}; an extension of the Kaluzhnin--Krasner theorem to this context will be the subject of a subsequent paper.
\section*{Acknowledgements}
Thanks to Andrea Montoli, who posed this question, and to both Mark Sioen and Tim Van der Linden for guidance in writing this paper.

% \bibliographystyle{amsalpha-nodash-init-nosentcase}
% \bibliography{biblio}

\end{document}